\documentclass[11pt]{amsart}

\usepackage{import}
\usepackage[T1]{fontenc}
\usepackage[utf8]{inputenc}
\usepackage{lmodern}
\usepackage{amsmath,amssymb,mathtools}
\usepackage{tikz}
\usepackage[margin=1.15in]{geometry}
\usepackage[final]{microtype}
\usepackage[colorlinks=true,linkcolor=blue,citecolor=blue,urlcolor=blue]{hyperref}
\usepackage{enumitem}

\newtheorem{theorem}{Theorem}[section]
\newtheorem{lemma}[theorem]{Lemma}

\newtheorem{corollary}[theorem]{Corollary}
\newtheorem*{claim}{Claim}
\theoremstyle{definition}
\newtheorem{definition}[theorem]{Definition}
\theoremstyle{remark}
\newtheorem{remark}[theorem]{Remark}
\newtheorem{question}[theorem]{Question}

\numberwithin{equation}{section}

\newcommand{\rr}{\mathfrak{rr}}

\newcommand{\rrf}{\mathfrak{rr}_{f}}
\newcommand{\rri}{\mathfrak{rr}_{i}}
\newcommand{\rro}{\mathfrak{rr}_{o}}
\newcommand{\rrfi}{\mathfrak{rr}_{fi}}
\newcommand{\rrfo}{\mathfrak{rr}_{fo}}
\newcommand{\rrio}{\mathfrak{rr}_{io}}
\newcommand{\sr}{\mathfrak{sr}}
\newcommand{\sz}{\mathfrak{sz}}
\newcommand{\bb}{\mathfrak b}
\newcommand{\dd}{\mathfrak d}

\newcommand{\cM}{\mathcal M}
\newcommand{\cN}{\mathcal N}
\newcommand{\covN}{\operatorname{cov}(\cN)}
\newcommand{\cofM}{\operatorname{cof}(\cM)}

\newcommand{\nonM}{\operatorname{non}(\cM)}
\newcommand{\Sym}{\operatorname{Sym}(\omega)}
\newcommand{\CC}{\mathsf{CC}}

\newcommand{\abs}[1]{\left| #1\right|}

\title{Determining the Rearrangement Number}
\author{Vinicius de Oliveira Rodrigues}
\address{University of São Paulo, Rua do Matão, 1010, São Paulo, SP, Brazil}
\email{vinior@ime.usp.br}
\date{}

\hypersetup{
  pdftitle={Determining the Rearrangement Number},
  pdfauthor={Vinicius de Oliveira Rodrigues},
  pdfsubject={The rearrangement number and the uniformity of the meager ideal},
  pdfkeywords={rearrangement number, cardinal characteristics, slaloms, conditionally convergent series}
}

\subjclass[2020]{Primary 03E17; Secondary 40A05}
\keywords{rearrangement number, cardinal characteristics, slaloms, conditionally convergent series}

\begin{document}

\begin{abstract}
The rearrangement number $\rr$ is the least cardinality of a collection of
permutations of $\omega$ such that every conditionally convergent real
series is disrupted by some permutation in the collection.  Blass, Brendle,
Brian, Hamkins, Hardy, and Larson proved that $\max\{\covN,\bb\}\leq\rr\leq\nonM$
and asked whether $\rr=\nonM$.
We prove in ZFC that $\rr=\nonM$.
\end{abstract}

\maketitle

\section{Introduction}\label{sec:introduction}

Riemann's rearrangement theorem states that every conditionally
convergent real series has rearrangements converging to any prescribed
element of $\mathbb R\cup\{-\infty,+\infty\}$, as well as
rearrangements that diverge by oscillation \cite{Riemann}.  This
flexibility leads naturally to the following question: how many
permutations are required to disrupt every conditionally convergent
series?

To the best of the author's knowledge, this question first appeared
explicitly in a 2015 MathOverflow post by Hardy \cite{HardyMO}.  Blass,
Brendle, Brian, Hamkins, Hardy, and Larson subsequently introduced the
corresponding cardinal characteristic $\rr$, called the
\emph{rearrangement number}, and systematically studied it
\cite{BBBHHL}.  They proved
\[
 \max\{\covN,\bb\}\leq\rr\leq\nonM.
\]
The definitions of these cardinals are recalled in
Section~\ref{sec:preliminaries}.

The relevant characteristics occupy adjacent levels of Cichoń's
diagram, as illustrated in Figure~\ref{fig:intro-cichon}.  The two
lower bounds are incomparable in ZFC: both $\covN<\bb$ and
$\bb<\covN$ are consistent.

\begin{figure}[ht]
 \centering
 \begin{tikzpicture}[
   x=1cm,
   y=1cm,
   cardinal/.style={font=\small,inner sep=2pt},
   classical/.style={->,thin},
   rrbound/.style={->,thick,densely dashed},
   rrnode/.style={font=\large,draw,rounded corners,inner sep=4pt}
  ]
  \node[cardinal] (covN) at (0,2.8) {$\operatorname{cov}(\cN)$};
  \node[cardinal] (nonM) at (3,2.8) {$\operatorname{non}(\cM)$};
  \node[cardinal] (cofM) at (6,2.8) {$\operatorname{cof}(\cM)$};
  \node[cardinal] (cofN) at (9,2.8) {$\operatorname{cof}(\cN)$};
  \node[cardinal] (c) at (11.5,2.8) {$\mathfrak c$};

  \node[cardinal] (b) at (3,1.4) {$\bb$};
  \node[cardinal] (d) at (6,1.4) {$\dd$};

  \node[cardinal] (aleph1) at (-2.5,0) {$\aleph_1$};
  \node[cardinal] (addN) at (0,0) {$\operatorname{add}(\cN)$};
  \node[cardinal] (addM) at (3,0) {$\operatorname{add}(\cM)$};
  \node[cardinal] (covM) at (6,0) {$\operatorname{cov}(\cM)$};
  \node[cardinal] (nonN) at (9,0) {$\operatorname{non}(\cN)$};

  \draw[classical] (covN) -- (nonM);
  \draw[classical] (nonM) -- (cofM);
  \draw[classical] (cofM) -- (cofN);
  \draw[classical] (addN) -- (addM);
  \draw[classical] (addM) -- (covM);
  \draw[classical] (covM) -- (nonN);
  \draw[classical] (addN) -- (covN);
  \draw[classical] (addM) -- (b);
  \draw[classical] (b) -- (nonM);
  \draw[classical] (covM) -- (d);
  \draw[classical] (d) -- (cofM);
  \draw[classical] (nonN) -- (cofN);
  \draw[classical] (b) -- (d);
  \draw[classical] (aleph1) -- (addN);
  \draw[classical] (cofN) -- (c);

  \node[rrnode] (rr) at (1.55,1.95) {$\rr$};
  \draw[rrbound] (covN) -- (rr);
  \draw[rrbound] (b) -- (rr);
  \draw[rrbound] (rr) -- (nonM);
 \end{tikzpicture}
 \caption{Cichoń's diagram with the previously known bounds for $\rr$.}
 \label{fig:intro-cichon}
\end{figure}

Explicitly, they asked the following.

\begin{question}[\cite{BBBHHL}, Question~46]\label{q:intro-rr-nonM}
Does ZFC prove that $\rr=\nonM$?
\end{question}

The question was repeated as Problem~10 by Brech, Brendle, and Telles
\cite{BrechBrendleTelles}.
In this paper, we settle this question by proving that the answer is positive.

\begin{theorem}[Main Theorem]\label{thm:main}
In ZFC, $\rr=\nonM$.
\end{theorem}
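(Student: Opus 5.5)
Since $\rr\leq\nonM$ is already proved in \cite{BBBHHL}, the task is to show $\nonM\leq\rr$. I will use the combinatorial characterization of $\nonM$ due to Bartoszyński: $\nonM$ is the least size of a family $F\subseteq\omega^\omega$ such that for every $g\in\omega^\omega$ there is $f\in F$ with $f(n)\neq g(n)$ for all but finitely many $n$. Equivalently, one can use a version with bounded functions or slaloms: for a suitable fast-growing $h$, $\nonM$ is the least size of a family $F\subseteq\prod_n h(n)$ such that no single $g$ agrees infinitely often with every member of $F$. So I will fix a family $P\subseteq\Sym$ with $|P|<\nonM$. From it I will produce a family $F$ of size at most $|P|$ in an appropriate product space. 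The smallness of $F$ then gives a $g$ that agrees infinitely often with every $f\in F$, and I will decode $g$ into a conditionally convergent series $\sum a_n$ that no $\pi\in P$ disrupts. This shows $\rr\geq\nonM$.

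The coding works as follows. Partition $\omega$ into consecutive finite intervals $I_k$ that grow fast, and fix signs so that on each block a tiny amount of mass can be placed. A candidate series will be built blockwise. On block $k$ we pick finitely many terms from a finite menu $M_k$ of admissible patterns, each of total sum $0$ and sup-norm about $2^{-k}$, so the series converges conditionally whatever the choices are. For a permutation $\pi$, the image $\pi[\bigcup_{j\le k} I_j]$ is covered by some initial segment of the blocks. Using the $\bb\le\rr$-style domination trick, I first normalize each $\pi$ by an increasing function $d_\pi$ controlling how far $\pi$ moves blocks. Since $|P|<\nonM\le\cofM$ is not enough to dominate, I will instead code $d_\pi$ into the pattern choice itself. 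Define $f_\pi(k)\in M_k$ to be the pattern that $\pi$ would \emph{damage most} on block $k$: the choice that makes the partial sums of the $\pi$-rearranged series, restricted to the relevant window, exceed a fixed threshold such as $1$. The menu will be designed so that for each window at most one pattern (or a small slalom of patterns) is ``dangerous'' for $\pi$, while any other choice keeps the rearranged partial sums small on that window.

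The main obstacle is the direction of the quantifiers. Bartoszyński's characterization yields a $g$ that agrees with each $f_\pi$ infinitely often, whereas we need $\pi$ to be \emph{harmless} on cofinitely many blocks. To get this, I will dualize. Let $f_\pi(k)$ encode the \emph{safe} choice: a value forced once $\pi$'s behavior on window $k$ is known. We build the series so that being disrupted requires $g(k)\neq f_\pi(k)$ on all but finitely many $k$. Equivalently, agreement infinitely often allows us to use those blocks to make the rearranged sums converge to the original sum. The delicate step, and the one I expect to be hardest, is the finite combinatorial lemma. It must show that for each block and each finite piece of a permutation, one can choose menu patterns so that $\pi$-partial sums over that block stay within $2^{-k}$ of the unpermuted ones, independently of the other blocks. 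The blocks interact through $\pi$ mixing them, and I would first try to control this by making blocks sparse (using $d_\pi$ with a slalom of width $k$ to absorb the mixing), and then apply the slalom form of $\nonM$ via the localization cardinal.
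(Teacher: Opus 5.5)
\textbf{The characterization of $\nonM$ is reversed.} The least size of $F\subseteq\omega^\omega$ such that every $g$ is eventually different from some $f\in F$ is $\operatorname{cov}(\cM)$, not $\nonM$. What $|F|<\nonM$ actually gives is a $g$ eventually different from \emph{every} $f\in F$. In the form the paper uses (Theorem~\ref{thm:slaloms}), fewer than $\nonM$ $r$-slaloms have a common eventual avoider.

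Your ``dualization'' cannot repair this. The decoded series is a single object that must survive every $\pi\in P$ at once. Agreeing with a safe value for $\pi$ on infinitely many blocks still leaves infinitely many mass-carrying blocks on which $\pi$ can push the rearranged partial sums past the threshold. In fact no scheme that turns infinitely-often agreement into an undisrupted series can be correct: it would prove $\operatorname{cov}(\cM)\leq\rr$. That fails in the Cohen model, where $\rr\leq\nonM=\aleph_1<\operatorname{cov}(\cM)=\mathfrak c$.

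Your first instinct was the right one. Let $\pi$ determine the small \emph{set} of patterns it damages on block $n$ (a slalom), and choose the series so that its pattern avoids that set on cofinitely many blocks.

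Likewise, a version with a fixed bound $h$ is not a ZFC characterization of $\nonM$. In the correct direction it is only $\leq\nonM$, and it can consistently be smaller. This, and not a normalization of $\pi$ by a dominating $d_\pi$, is where $\bb$ enters in the paper. The avoider $e$ is an arbitrary element of $\omega^\omega$, so the paper composes with $\mathfrak B$ to get $g$ with $e<^\infty g$. It uses a menu of size $g(n)$ on block $n$ and places pattern $e(n)$ only when $e(n)<g(n)$. This yields $\nonM=\|\mathfrak S_r\|\leq\|\mathfrak B;\mathfrak R\|=\max\{\bb,\rr\}=\rr$.

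\textbf{The key finite lemma is missing.} The lemma that makes the damaged set small is only announced (``the menu will be designed so that\dots''), yet it is the heart of the proof. What is needed: for every menu size $m$ and precision $q$, zero-sum patterns $v_0,\dots,v_{m-1}\in\{-1/L,1/L\}^L$ such that each $\sigma\in\operatorname{Sym}(L)$ gives a partial sum of absolute value $>1/q$ for at most $Cq^4$ of them. Here $C$ must be \emph{independent of $m$}, because the slalom width $r(n)$ is fixed while the menu size $g(n)$ is arbitrary.

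The paper's Lemma~\ref{lem:finite} takes $L=2^m$ and $v_k(i)=x_i(k)/L$ for an enumeration $(x_i)_{i<L}$ of $\{-1,1\}^m$. These vectors are orthogonal, so Bessel's inequality gives $\sum_k|S^\sigma_j(v_k)|^2\leq1$ for every prefix length $j$ and every $\sigma$. Telescoping plus Cauchy--Schwarz then upgrades this to $\sum_kM_\sigma(v_k)^4\leq4$.

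With this lemma in hand, the mixing of blocks that worries you is a non-issue. The permutation $\pi$ induces a permutation $\pi_n$ of each block, with $M_\pi(u)=M_{\pi_n}(v)$ when $v$ is placed on that block as $u$. Every block sums to zero. So once the first $N$ blocks are fully enumerated, the $\pi$-partial sums are bounded by $\sum_{n\geq N}2^{-n-1}$, provided the chosen patterns avoid the damaged sets from block $N$ on. No control of how far $\pi$ moves blocks is required.

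\textbf{A smaller error.} Zero block sums and sup-norm $2^{-k}$ do not make the series converge ``whatever the choices are''. The natural-order maximal partial sums must also be small. The paper ensures this by putting the patterns with large $M(u)$ into the slalom as well.
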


\section{Notation and previous results}\label{sec:preliminaries}

The set of all natural numbers is identified with the first infinite ordinal, $\omega$.
$\Sym$ denotes the group of all permutations of $\omega$.
We identify a real series with a sequence
$a=\langle a_n:n<\omega\rangle\in\mathbb R^\omega$ and write
\[
 S_m(a)=\sum_{n<m}a_n,
 \qquad
 S_m^\pi(a)=\sum_{n<m}a_{\pi(n)}
 \quad(\pi\in\Sym, m \in \omega).
\]
A series $a$ is conditionally convergent if $\sum_n a_n$ converges but
$\sum_n\abs{a_n}$ diverges.
The set of all conditionally convergent series is denoted by $\CC$.

\begin{definition}\label{def:rr}
A collection $\Pi\subseteq\Sym$ is \emph{rearranging} if for every
$a\in\CC$ there exists $\pi\in\Pi$ such that
$\sum_n a_{\pi(n)}$ either diverges or converges to a real number different
from the original sum.  The \emph{rearrangement number} is defined as
\[
      \rr=\min\{\abs{\Pi}:\Pi\subseteq\Sym
      \text{ is rearranging}\}.
\]
\end{definition}

For
$f,g\in\omega^\omega$, write $f\leq^*g$ when
$f(n)\leq g(n)$ for all but finitely many $n\in \omega$.
If $f\not \leq^*g$, we write $g<^\infty f$.

The bounding number
$\bb$ is the least cardinality of a $\leq^*$-unbounded subset of
$\omega^\omega$.

Let $\cM$ and $\cN$ be the ideals of meager and Lebesgue null subsets
of $\mathbb R$, respectively, and let $\mathfrak c=|\mathbb R|$.
The cardinal $\covN$ is the least cardinality of a collection of Lebesgue null subsets of $\mathbb R$ whose union is $\mathbb R$.
Finally, the cardinal $\nonM$ is the least cardinality of a nonmeager subset of $\mathbb R$.
It is well known that $\aleph_1\leq \max\{\covN,\bb\}\leq\nonM\leq \mathfrak c$.
See \cite{BlassHandbook} for more
background on these cardinal invariants.

\begin{theorem}[{\cite[Theorems 7, 8, and 11]{BBBHHL}}]
\label{thm:known-sandwich}
In ZFC, $\max\{\covN,\bb\}\leq\rr\leq\nonM$.
\end{theorem}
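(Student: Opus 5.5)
The plan is to prove the three inequalities $\rr\leq\nonM$, $\bb\leq\rr$ and $\covN\leq\rr$ separately. Throughout, $\Sym$ carries the subspace topology from $\omega^\omega$. In this topology it is a $G_\delta$ subset, hence a perfect Polish space, so $\nonM$ may be computed in $\Sym$. Similarly, $\covN$ may be computed in $2^\omega$ with the fair coin measure.

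\emph{Upper bound $\rr\leq\nonM$.} Fix $a\in\CC$ with sum $s$. I would show that the set of $\pi$ with $\sum_n a_{\pi(n)}=s$ is meager in $\Sym$. For each $N$, consider the set
\[
U_N=\{\pi\in\Sym:\exists m>N\ S^\pi_m(a)>s+1\ \text{and}\ \exists m'>N\ S^\pi_{m'}(a)<s-1\}.
\]
It is open, because the condition depends only on a finite initial segment of $\pi$. It is dense for the following reason. A basic open set is given by a finite injection $p$. Since $\sum a_n^+=\sum a_n^-=\infty$, we can extend $p$ by listing enough fresh indices with positive terms to push the partial sum above $s+1$, then enough fresh negative ones to push it below $s-1$. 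Any finite injection extends to a permutation. Every $\pi\in\bigcap_N U_N$ makes the rearranged series oscillate, so the permutations that do not disrupt $a$ lie in a meager set. Now let $\Pi\subseteq\Sym$ be nonmeager with $\abs{\Pi}=\nonM$. A meager set cannot contain $\Pi$, so for each $a$ some $\pi\in\Pi$ disrupts $a$.

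\emph{Lower bound $\bb\leq\rr$.} Let $\Pi$ have size $<\bb$. For each $\pi\in\Pi$ put $g_\pi(n)=\max(\pi[n+1]\cup\pi^{-1}[n+1])$, and choose an increasing $h$ with $g_\pi\leq^* h$ for all $\pi$. Choose $x_0<x_1<\dots$ with $x_{k+1}>h(h(x_k+1))$, and define $a_{x_k}=(-1)^k/\lfloor k/2+1\rfloor$, with $a$ equal to $0$ elsewhere. Thus $a$ consists of pairs $+1/j,-1/j$ in consecutive support points. Then $\sum\abs{a_n}=\infty$ and $\sum a_n=0$, so $a\in\CC$.

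Fix $\pi\in\Pi$ and $m$ large, and let $k$ satisfy $x_k\leq m<x_{k+1}$ roughly. The eventual domination gives two facts. First, $\pi[m]$ contains every support point $x_i$ with $i<k-1$. Second, it contains no $x_i$ with $i>k+1$. So $S^\pi_m(a)$ equals $0$ plus at most a bounded number of terms of size $O(1/k)$. Hence $S^\pi_m(a)\to0$, so no $\pi\in\Pi$ disrupts $a$, and $\Pi$ is not rearranging. The main obstacle is getting the spacing of the $x_k$ exactly right so that this two-sided containment holds. It may need $h$ iterated a fixed number of times.

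\emph{Lower bound $\covN\leq\rr$.} Let $\Pi$ have size $<\covN$, and take random signs $\varepsilon\in\{\pm1\}^\omega$ with $a_n=\varepsilon_n/(n+1)$. For each fixed $\pi$, both $\sum_n a_n$ and $\sum_n a_{\pi(n)}$ are series of independent mean-zero terms with summable variances. By Kolmogorov's theorem, both converge almost surely. They also converge in $L^2$ to the same limit, since the terms are orthogonal and $\sum 1/(n+1)^2$ converges unconditionally. Therefore the two sums agree almost surely. The set of $\varepsilon$ for which $\pi$ disrupts $a$ is thus null. Fewer than $\covN$ null sets cannot cover $\{\pm1\}^\omega$, so some $\varepsilon$ avoids all of them. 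For that $\varepsilon$ the series is conditionally convergent, since $\abs{a_n}=1/(n+1)$, and no member of $\Pi$ disrupts it.
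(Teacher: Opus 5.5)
Your proposal is correct. The paper does not prove this statement but quotes it from \cite{BBBHHL}, and your three arguments are essentially the ones used there: a dense $G_\delta$ set of permutations making a fixed $a\in\CC$ oscillate for $\rr\leq\nonM$; a sparse alternating series $1,-1,\frac12,-\frac12,\dots$ placed along a sequence that outruns a common $\leq^*$-bound for the displacement functions of $\Pi$ for $\bb\leq\rr$; and random signs on the harmonic series (almost sure convergence of both orderings plus equality of their $L^2$ limits) for $\covN\leq\rr$.

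The spacing worry you flag is unfounded, since $x_{k+1}>h(x_k)$ alone gives $\{x_i:i<k\}\subseteq\pi[m]$ and $\pi[m]\cap\{x_i:i\geq k+2\}=\varnothing$ for all sufficiently large $m\in[x_k,x_{k+1})$, so $S^\pi_m(a)=O(1/k)\to0$.
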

For a sequence $r:\omega\to\omega\setminus\{0\}$, let
\[
 {\mathcal S}_r=\left\{\varphi:\omega\to[\omega]^{<\omega}:
                         \abs{\varphi(n)}\leq r(n)\text{ for every }n\right\}.
\]
The elements of ${\mathcal S}_r$ are called \emph{$r$-slaloms}. A function
$x\in\omega^\omega$ \emph{eventually avoids} $\varphi$ if
$x(n)\notin\varphi(n)$ for all but finitely many $n$.

\begin{theorem}[Bartoszy\'nski, {\cite[Lemma 2.4.8]{BartoszynskiJudah}}]\label{thm:slaloms}
For every $r:\omega\to\omega\setminus\{0\}$, every family
$\Phi\subseteq{\mathcal S}_r$ of cardinality less than $\nonM$
has a common eventual avoider.
\end{theorem}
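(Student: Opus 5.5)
The plan is to derive the statement from the combinatorial characterization of $\nonM$ due to Bartoszy\'nski and Miller: a family $F\subseteq\omega^\omega$ with $\abs{F}<\nonM$ admits a single $g\in\omega^\omega$ that is eventually different from every member of $F$, i.e.\ $\forall f\in F\ \forall^\infty m\ g(m)\neq f(m)$. I would first recall why this holds. By the standard description of meager subsets of $\omega^\omega$, a meager set is contained in some set of the form $\{y:\forall^\infty k\ y\restriction I_k\neq z\restriction I_k\}$ for an interval partition $\langle I_k\rangle$ and a real $z$. Applying this to a small family of suitably coded reals should give the eventually different $g$.

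The second step is to pass from one eventually different real to an avoider of slaloms of width $r$; this is where the coding has to be done. Given $\varphi\in\mathcal S_r$, I would group $\omega$ into consecutive blocks $J_n$ with $\abs{J_n}=r(n)$. Writing $\varphi(n)=\{a^\varphi_{n,0},\dots,a^\varphi_{n,r(n)-1}\}$ (padded with $0$ if necessary), I would define $f_\varphi$ on $J_n$ by letting its $j$-th coordinate be $a^\varphi_{n,j}$. An eventually different $g$ then disagrees with $f_\varphi$ coordinatewise on almost every block.

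The trouble is that coordinatewise disagreement only yields $b_j\neq a_j$ for each $j$, not a single value lying outside the whole set $\varphi(n)$. To overcome this, I would have each coordinate of the block record the entire finite set, coded as a natural number, rather than one of its elements. I would then combine this with the meager-set description above in the compact setting where the values at coordinate $n$ are bounded. Concretely, I would first reduce to slaloms $\varphi(n)\subseteq h(n)$ for a fast-growing $h$ with $r(n)/h(n)$ small, by a standard truncation argument using that $\bb\leq\nonM$.

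In that compact product $\prod_n h(n)$, the set of reals that are not eventual avoiders of a given $\varphi$ has a measure-like smallness. So the family $\{\varphi\}$, coded as points of $\prod_n\mathcal P(h(n))$, forms a set of size $<\nonM$ and is therefore meager. From a witness partition $\langle I_k\rangle$ and real $z$ for this meagerness, I would read off an $x$ with $x(n)\in h(n)\setminus\varphi(n)$ for almost all $n$, using the slack $h(n)>r(n)$ on each block. I expect this conversion, from ``$\varphi$ eventually disagrees with $z$ on blocks'' to ``$x$ eventually avoids $\varphi$,'' to be the main obstacle. I would first try a counting argument on each $I_k$: choose $z\restriction I_k$ to enumerate enough candidate values that any slalom agreeing with $z$ on no block must miss one of them.
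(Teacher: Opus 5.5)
\textbf{There is a genuine gap: the passage from a small family to an avoider.} The compact route you fall back on cannot close it.

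The truncation itself is free: an $x\in\prod_n h(n)$ avoiding $\varphi(n)\cap h(n)$ avoids $\varphi(n)$. The inequality $\bb\le\nonM$ plays no role, since $\abs{\Phi}<\nonM$ does not give $\abs{\Phi}<\bb$.

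The problem is that the reduced statement, with $h$ fixed independently of $\Phi$, is consistently false. In the Laver model, take $h$ in the ground model and $r(n)=2^n$. The Laver property puts every $x\in\prod_n h(n)$ inside, at every coordinate, one of the $\aleph_1$ ground-model $r$-slaloms below $h$. Yet $\nonM\ge\bb=\aleph_2$. The avoider the theorem guarantees, for instance $x=h$, lies outside $\prod_n h(n)$.

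More concretely, a meagerness witness for $\Phi$ itself says nothing about avoidance. Assume, harmlessly, $r\ge 2$, so $h(n)\ge 3$. Fix any partition $\langle I_k\rangle$, any $z\in\prod_n\mathcal P(h(n))$ and any $x\in\prod_n h(n)$. Let $\varphi(n)=\{x(n),c_n\}$ with $c_n$ chosen so that $\varphi(n)\neq z(n)$. Then $\varphi\restriction I_k\neq z\restriction I_k$ for every $k$, and $\varphi$ catches $x$ everywhere. So no avoider can be read off from $(\langle I_k\rangle,z)$, by counting or otherwise.

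Separately, the interval-partition description of meager sets holds in $2^\omega$ and finitely branching products, not in $\omega^\omega$. You can, however, simply cite the eventually-different characterization of $\nonM$.

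\textbf{The missing idea is a diagonal version of your first coding.} Put on the $i$-th coordinate of the block one coordinate of $a^\varphi_{n,i}$, not the element itself; coding the whole set $\varphi(n)$ still only gives $g(m)\neq f_\varphi(m)$.

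Let $J_n=\{j_{n,0}<\dots<j_{n,r(n)-1}\}$ be consecutive blocks, and fix bijections $c_n:\omega\to\omega^{J_n}$. List $\varphi(n)\subseteq\{a^\varphi_{n,i}:i<r(n)\}$, padding arbitrarily, and set $f_\varphi(j_{n,i})=c_n(a^\varphi_{n,i})(j_{n,i})$. By the Bartoszy\'nski--Miller characterization, since $\abs{\{f_\varphi:\varphi\in\Phi\}}<\nonM$, there is a $g$ eventually different from every $f_\varphi$. Put $x(n)=c_n^{-1}(g\restriction J_n)$.

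For almost every $n$, $g\restriction J_n$ differs from $c_n(a^\varphi_{n,i})$ at the coordinate $j_{n,i}$, for each $i<r(n)$. Hence $x(n)\neq a^\varphi_{n,i}$ for all $i$, so $x(n)\notin\varphi(n)$. This proves the statement in a few lines; the paper gives no proof and cites it from Bartoszy\'nski--Judah.
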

We identify a natural number $m$ with $\{0,\ldots,m-1\}$. For a finite set $X$, $\operatorname{Sym}(X)$
denotes its permutation group.
In particular, for a natural number $L$, $\operatorname{Sym}(L)$ is the standard permutation group $S_L$.

Following \cite{BlassHandbook}, we use the language of Galois--Tukey connections.

\begin{definition}
      A \emph{relation triple}, or simply a \emph{relation}, is a triple $\mathbf A=(A_-,A_+,A)$, where $A_-$ and $A_+$ are nonempty sets and $A\subseteq A_-\times A_+$ has domain $A_-$.

      Elements of $A_-$ are called \emph{challenges} and elements of $A_+$ are called \emph{responses}.

      The norm of a relation $\mathbf A$ is defined as
      \[
            \|\mathbf A\|=\min\{|Y|:Y\subseteq A_+\text{ and }\forall x\in A_-\ \exists y\in Y\ (x,y)\in A\}.
      \]
\end{definition}

Thus:
\begin{itemize}
      \item The cardinal $\mathfrak b$ is $\|\mathfrak B\|$, where $\mathfrak B=(\omega^\omega,\omega^\omega,<^{\infty})$.
      \item The cardinal $\nonM$ is $\|\mathfrak S_r\|$, where $\mathfrak S_r=(\omega^\omega,{\mathcal S}_r,\in^\infty)$ for any $r:\omega\to\omega\setminus\{0\}$ with $\lim_{n\to\infty}r(n)=\infty$.
      Here $e\in^\infty\varphi$ means that $e(n)\in\varphi(n)$ for infinitely many $n$.
      \item The cardinal $\rr$ is $\|\mathfrak R\|$, where $\mathfrak R=(\CC,\Sym,\text{rearranges})$.
\end{itemize}

\begin{definition}
      Let $\mathbf A=(A_-,A_+,A)$ and $\mathbf B=(B_-,B_+,B)$ be relations. A \emph{morphism} from $\mathbf A$ to $\mathbf B$ is a pair of functions $(\phi_-,\phi_+)$, where $\phi_-:B_-\to A_-$ and $\phi_+:A_+\to B_+$, such that
      \[
            \forall x\in B_-\ \forall y\in A_+\ (\phi_-(x),y)\in A\rightarrow(x,\phi_+(y))\in B.
      \]
\end{definition}

It is straightforward to check that if there is a morphism from $\mathbf A$ to $\mathbf B$, then $\|\mathbf A\|\geq\|\mathbf B\|$.

\begin{definition}[{\cite[Definition 4.10]{BlassHandbook}}]
\label{def:sequential-composition}
For relations $\mathbf A=(A_-,A_+,A)$ and $\mathbf B=(B_-,B_+,B)$,
their \emph{sequential composition} is
\[
 \mathbf A;\mathbf B
 =\bigl(A_-\times{}^{A_+}B_-,\ A_+\times B_+,\ C\bigr),
\]
where ${}^{A_+}B_-$ is the set of functions from $A_+$ to $B_-$, and
\[
 (x,f)\,C\,(a,b)
 \quad\Longleftrightarrow\quad x\,A\,a\text{ and }f(a)\,B\,b.
\]
Thus the second challenge $f(a)$ depends on the response $a$ to the
first challenge $x$.
\end{definition}

\begin{remark}[{\cite[Theorem 4.11(3)]{BlassHandbook}}]
\label{rem:sequential-norm}
The norm of the sequential composition satisfies
\[
 \|\mathbf A;\mathbf B\|=\|\mathbf A\|\cdot\|\mathbf B\|.
\]
For the upper bound, which is all we need, take families $Y\subseteq A_+$
and $Z\subseteq B_+$ witnessing the two norms. Given a challenge $(x,f)$,
choose $a\in Y$ with $x\,A\,a$, and then $b\in Z$ with $f(a)\,B\,b$.
Hence $Y\times Z$ responds to every challenge in $\mathbf A;\mathbf B$.
\end{remark}

\section{A counting estimate}\label{sec:finite}

For a finitely supported sequence $u\in\mathbb R^\omega$ and
$\pi\in\Sym$, let
\[
 M(u)=\sup_{j<\omega}\left|S_j(u)\right| \quad \text{and}\quad
 M_\pi(u)=\sup_{j<\omega}\left|S_j^\pi(u)\right|.
\]
For $v\in\mathbb R^L$ and $\sigma\in\operatorname{Sym}(L)$, use
the same notation with the maximum over $j\leq L$.

When $v\in \{-p, p\}^L$ for some $p>0$, we think of $v$ as a walk starting at $0$.
We interpret $v_i=p$ as a step of length $p$ to the right and $v_i=-p$ as a step of length $p$ to the left.
Then $M(v)$ is the maximum distance from the origin reached by this walk.

We say that $v \in \{-p, p\}^L$ is \emph{balanced} if $S_L(v)=0$, that is, if the walk ends at the origin.
Clearly, balanced vectors exist only for even integers $L$.

\begin{lemma}\label{lem:finite}
For all positive integers $m$ and $q$,
there exist a positive integer $L$ and balanced vectors
$(v_k)_{k<m}$ in $\{-1/L,1/L\}^L$ satisfying, for every $\sigma\in\operatorname{Sym}(L)$,
\begin{equation*}
 \left|\{k<m:M_\sigma(v_k)>1/q\}\right|\leq 4q^4.
\end{equation*}
\end{lemma}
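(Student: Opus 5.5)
The plan is to take the $v_k$ to be pairwise orthogonal and then use Bessel's inequality at a bounded number of checkpoints along the rearranged walk. Put $L=2^t$ with $t$ large enough that $L\geq m+1$ and $L\geq 4q$. Let $H$ be the $L\times L$ Sylvester--Hadamard matrix. Its rows are $\pm1$ vectors, they are pairwise orthogonal, and one of them is the all-ones row. Every other row is orthogonal to the all-ones row, so it has zero sum. Choose $m$ distinct non-constant rows $h_k$ and set $v_k=h_k/L$. Each $v_k$ is a balanced vector in $\{-1/L,1/L\}^L$. Moreover the vectors $e_k=\sqrt L\, v_k=h_k/\sqrt L$ are orthonormal in $\mathbb R^L$.

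The first step is a bound for a single fixed set $A\subseteq L$. Note that $\langle v_k,1_A\rangle=L^{-1/2}\langle e_k,1_A\rangle$. By Bessel's inequality,
\[
 \sum_{k<m}\langle v_k,1_A\rangle^2\leq \frac{\|1_A\|^2}{L}=\frac{|A|}{L}\leq 1 .
\]
Hence for any threshold $\varepsilon>0$, at most $\varepsilon^{-2}$ indices $k$ satisfy $|\langle v_k,1_A\rangle|>\varepsilon$. With $\varepsilon=1/(2q)$ this is at most $4q^2$ indices.

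Next, fix $\sigma\in\operatorname{Sym}(L)$ and write $A_j=\sigma[j]$. Then $S^\sigma_j(v_k)=\langle v_k,1_{A_j}\rangle$. This quantity changes by exactly $1/L$ at each step in $j$, and it vanishes at $j=0$ and at $j=L$ because $v_k$ is balanced. Take the checkpoints $j_i=\lfloor iL/(2q)\rfloor$ for $0<i<2q$. Every $j\in[0,L]$ lies within $L/(4q)+1$ of some point of $\{j_i: 0<i<2q\}\cup\{0,L\}$.

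Suppose $M_\sigma(v_k)>1/q$, witnessed at some $j$. The nearest checkpoint $j'$ satisfies
\[
 |S^\sigma_{j'}(v_k)|>\frac1q-\frac{1}{4q}-\frac1L\geq\frac1{2q},
\]
using $L\geq 4q$. This forces $j'\neq 0,L$, since the walk is $0$ there, so $j'=j_i$ for some $0<i<2q$. Therefore every bad $k$ has $|\langle v_k,1_{A_{j_i}}\rangle|>1/(2q)$ for one of these $2q-1$ checkpoints. By the Bessel step, the number of bad $k$ is at most
\[
 (2q-1)\cdot 4q^2\leq 4q^4,
\]
since $2q-1\leq q^2$.

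The main point to get right is the discretization. A priori each $k$ may use a different prefix $A_j$, and there are $L$ of them, which would ruin any union bound. The $1/L$-Lipschitz property in $j$ fixes this: it reduces the problem to a number of prefixes depending only on $q$. Excluding the endpoints via balancedness is what makes the constant come out as $4q^4$, including for small $q$. The only bookkeeping needed is the choice of $L$ as a power of $2$ that is at least $\max(m+1,4q)$.
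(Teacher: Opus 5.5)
Your proof is correct. It differs from the paper's in the one step that matters: getting from a per-prefix Bessel bound to a bound on the maximum $M_\sigma(v_k)$.

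\textbf{The paper's route.} The paper takes the $m$ coordinate functions on $\{-1,1\}^m$, so $L=2^m$. It obtains $\sum_{k<m}|S_j^\sigma(v_k)|^2\le j/L\le 1$ for every $j$, exactly as you do. It then uses the telescoping identity $|S_j^\sigma(v_k)|^2=\sum_{i<j}v_k(\sigma(i))\bigl(S^\sigma_{i+1}(v_k)+S^\sigma_i(v_k)\bigr)$ together with balancedness to get $M_\sigma(v_k)^2\le\frac2L\sum_{i<L}|S_i^\sigma(v_k)|$. Cauchy--Schwarz and summing over $k$ give the threshold-free inequality $\sum_{k<m}M_\sigma(v_k)^4\le 4$, and $4q^4$ follows by Chebyshev.

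\textbf{Your route.} You use that each walk is $1/L$-Lipschitz in $j$ and vanishes at both ends. Hence exceeding $1/q$ anywhere forces exceeding $1/(2q)$ at one of $2q-1$ fixed checkpoints, and a union bound over checkpoints finishes.

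\textbf{Comparison.} The paper gets a single moment bound valid for all thresholds at once, with vectors independent of $q$. Your argument is more elementary and gives the sharper count $(2q-1)\cdot 4q^2<8q^3$. Because you use the full Walsh system rather than only the Rademacher-type coordinates, your $L$ is of order $\max\{m,q\}$ instead of $2^m$. The price is that $L$ depends on $q$ through $L\ge 4q$. This is harmless here, since in Lemma~\ref{lem:morphism} both $m=g(n)$ and $q=2^{n+1}$ are fixed when the lemma is applied, and any bound depending only on $q$ suffices there.
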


Recall that if $(e_k)_{k<m}$ is an orthonormal family in a real
inner product space, then Bessel's inequality gives
$\sum_{k<m}|\langle b,e_k\rangle|^2\leq\|b\|^2$ for every vector $b$
in that space.

\begin{proof}
List the elements of $\{-1,1\}^m$ as $(x_i)_{i<L}$, where $L=2^m$.
For $k<m$, put
\[
 v_k(i)=\frac{x_i(k)}L\qquad(i<L).
\]
Each $v_k$ is balanced, since
\begin{equation*}
      \sum_{i<L}v_k(i)=\frac1L\sum_{i<L}x_i(k)
      =\frac1L\sum_{x\in\{-1,1\}^m}x(k)=0.
\end{equation*}
Moreover, for every two distinct $k,\ell<m$,
\begin{equation}\label{eq:orthogonal}
 v_k\cdot v_\ell
 =\frac1{L^2}\sum_{x\in\{-1,1\}^m}x(k)x(\ell)
 =\frac1{L^2}\left(\sum_{x(k)=x(\ell)}1-\sum_{x(k)\neq x(\ell)}1\right)
 =0.
\end{equation}
Also, for every $k<m$,
\begin{equation}\label{eq:norm}
 \|v_k\|^2=\sum_{i<L}v_k(i)^2
 =\sum_{i<L}\frac1{L^2}=\frac1L.
\end{equation}

Fix $\sigma\in\operatorname{Sym}(L)$.
The vectors $e_k=\sqrt L\,(v_k\circ\sigma)$ are orthonormal by \eqref{eq:orthogonal} and \eqref{eq:norm}.
For $j\leq L$, define $b_j\in\mathbb R^L$ by
\[
 b_j(i)=
 \begin{cases}
  1,&i<j,\\
  0,&j\leq i<L.
 \end{cases}
\]
Then, for each $j\leq L$,
\begin{equation}\label{eq:prefix-square}
 \sum_{k<m}|S_j^\sigma(v_k)|^2
 =\frac1L\sum_{k<m}|\langle b_j,e_k\rangle|^2
 \leq\frac{\|b_j\|^2}{L}
 =\frac jL\leq1.
\end{equation}

For $k<m$ and $j\leq L$, telescoping gives
\[
\begin{aligned}
 |S_j^\sigma(v_k)|^2
 &=\sum_{i<j}\left((S_{i+1}^\sigma(v_k))^2
                         -(S_i^\sigma(v_k))^2\right)\\
 &=\sum_{i<j}\left(S_{i+1}^\sigma(v_k)-S_i^\sigma(v_k)\right)
             \left(S_{i+1}^\sigma(v_k)+S_i^\sigma(v_k)\right)\\
 &=\sum_{i<j}v_k(\sigma(i))
             \left(S_{i+1}^\sigma(v_k)+S_i^\sigma(v_k)\right)\\
 &\leq\frac1L\sum_{i<L}
             \left(|S_{i+1}^\sigma(v_k)|+|S_i^\sigma(v_k)|\right)
 =\frac2L\sum_{i<L}|S_i^\sigma(v_k)|,
\end{aligned}
\]
where the last equality uses $S_0^\sigma(v_k)=S_L^\sigma(v_k)=0$.
Taking the maximum over $j$ and applying the Cauchy--Schwarz inequality, we get
\[
 M_\sigma(v_k)^4
 \leq\left(\frac2L\sum_{i<L}1\cdot|S_i^\sigma(v_k)|\right)^2
 \leq\frac4L\sum_{i<L}|S_i^\sigma(v_k)|^2.
\]
Summing over $k$ and using \eqref{eq:prefix-square},
\[
 \sum_{k<m}M_\sigma(v_k)^4
 \leq\frac4L\sum_{i<L}\sum_{k<m}|S_i^\sigma(v_k)|^2
 \leq\frac4L\sum_{i<L}1=4.
\]
Finally, let $B=\{k<m:M_\sigma(v_k)>1/q\}$.
The required bound follows from
\[
 \frac{|B|}{q^4}=\sum_{k\in B}\frac1{q^4}
 \leq\sum_{k\in B}M_\sigma(v_k)^4
 \leq\sum_{k<m}M_\sigma(v_k)^4\leq4.\qedhere
\]
\end{proof}

\section{A morphism into the slaloms}\label{sec:construction}

We construct a morphism from $\mathfrak B;\mathfrak R$ to
$\mathfrak S_r$ for a suitable sequence $r$. This will give
$\nonM\leq\max\{\mathfrak b,\rr\}$.

\begin{lemma}\label{lem:morphism}
There is a function $r:\omega\to\omega\setminus\{0\}$ with
$r(n)\to\infty$ and a morphism
\[
 (\phi_-,\phi_+):\mathfrak B;\mathfrak R\longrightarrow\mathfrak S_r.
\]
\end{lemma}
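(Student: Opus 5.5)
The plan is to encode a challenge $e\in\omega^\omega$ for $\mathfrak S_r$ as a conditionally convergent series built in blocks from the vectors of Lemma~\ref{lem:finite}. Block $n$ will carry the vector with index $e(n)$ whenever that index is available. The $\mathfrak B$-component guarantees that infinitely many blocks are nonzero, so that the series is in $\CC$. Lemma~\ref{lem:finite} then lets a permutation $\pi$ determine, for each block, a small set of ``dangerous'' indices, and these sets form the slalom. I will take $q_n=2^{n}$ and $r(n)=4q_n^4=4\cdot 2^{4n}$, so $r(n)\to\infty$.

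\emph{Definition of $\phi_-$.} Given $e\in\omega^\omega$, set $f=e$. For $g\in\omega^\omega$, let $m_n=g(n)+1$. Apply Lemma~\ref{lem:finite} with $m=m_n$ and $q=q_n$ to obtain $L_n$ and balanced vectors $(v^n_k)_{k<m_n}$ in $\{-1/L_n,1/L_n\}^{L_n}$, chosen by a fixed choice function so that they depend only on $g(n)$ and $n$. Partition $\omega$ into consecutive intervals $I_n$ of length $L_n$; these depend only on $g$.

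Define the series $F(g)=a$ as follows.
\begin{itemize}
\item If $e(n)<m_n$, call $n$ \emph{active}. On $I_n$, let $a$ be $c_n v^n_{e(n)}$, transported to $I_n$ in increasing order, where $c_n=1/(j+1)$ and $j$ is the number of active blocks before $n$.
\item If $n$ is not active, let $a$ be $0$ on $I_n$.
\end{itemize}
Put $\phi_-(e)=(f,F)$.

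\emph{$F(g)\in\CC$ when the first coordinate is answered.} Suppose $g\not\le^* f$. Then $g(n)>e(n)$ for infinitely many $n$, so there are infinitely many active blocks. Hence $c_n\to0$ along active blocks, and $\sum|a_i|=\sum_{j}1/(j+1)=\infty$. Each block sums to $0$ because the vectors are balanced. Partial sums inside block $n$ are bounded by $c_n$, which tends to $0$. So $\sum a_i=0$ converges, and $a\in\CC$. (When $g\le^*f$ the response fails the first conjunct, so $F(g)$ may be arbitrary there; it is still harmless.)

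\emph{Definition of $\phi_+$.} Given $(g,\pi)$, compute $m_n$, $L_n$, $v^n_k$ and $I_n$ from $g$ exactly as above. Let $\sigma_n\in\operatorname{Sym}(L_n)$ be the order in which $\pi$ enumerates $I_n$: list $I_n$ by increasing $\pi^{-1}$-value. Set
\[
\varphi(n)=\{k<m_n: M_{\sigma_n}(v^n_k)>1/q_n\}.
\]
By Lemma~\ref{lem:finite}, $|\varphi(n)|\le r(n)$, so $\varphi\in{\mathcal S}_r$.

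\emph{Verification.} Assume $g\not\le^* e$ and that $\pi$ rearranges $a=F(g)$. Suppose, towards a contradiction, that $e(n)\notin\varphi(n)$ for all $n\ge N$. The key observation is that for every $t$, the set $\pi[t]\cap I_n$ is a $\sigma_n$-prefix of $I_n$. Hence the contribution of block $n$ to $S^\pi_t(a)$ is $0$ if the block is inactive, and otherwise it is $c_n S^{\sigma_n}_s(v^n_{e(n)})$ for some $s\le L_n$. This contribution is $0$ when the prefix is empty or full.

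Given $t$, let $N'(t)$ be the least $n$ such that $I_n\not\subseteq\pi[t]$; then $N'(t)\to\infty$ as $t\to\infty$. Blocks below $N'(t)$ are fully covered and contribute $0$. For $t$ large, every partially covered block has index $n\ge N'(t)\ge N$, so by the choice of $\varphi$ its contribution is at most $c_n/q_n\le 2^{-n}$. Therefore
\[
|S^\pi_t(a)|\le\sum_{n\ge N'(t)}2^{-n}\to0,
\]
so $\sum a_{\pi(i)}=0=\sum a_i$. This contradicts the assumption that $\pi$ rearranges $a$. Hence $e\in^\infty\varphi$, and $(\phi_-,\phi_+)$ is a morphism.

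\emph{Main obstacle.} The delicate step is that the slalom must be computed from $(g,\pi)$ alone, without knowing $e$. This is why the block geometry ($I_n$, $L_n$, $v^n_k$) is made to depend only on $g$, while all dependence on $e$ is pushed into the choice of index and into the scalars $c_n$. The point to check carefully is that the bound $c_n/q_n\le 2^{-n}$ is uniform, so the weights $c_n$, which encode the divergence of $\sum|a_i|$ and depend on $e$, never enter the slalom.
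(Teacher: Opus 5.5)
Your proof is correct and follows essentially the paper's route. You use the same block encoding of $e(n)$ by the vectors of Lemma~\ref{lem:finite} on intervals determined by $g$, the same slalom of indices that are bad for the block permutation induced by $\pi$, and the same contradiction showing that the $\pi$-rearranged sum is still $0$.

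The one real variation is your harmonic weights $c_n$. They make $F(g)$ converge to $0$ and lie in $\CC$ as soon as $g\not\le^* e$, with no slalom hypothesis. So you never have to put into the slalom the indices $k$ with $M(u^g_{n,k})>1/2^{n+1}$. The paper instead uses unweighted blocks. It therefore applies the lemma a second time with $\sigma=\operatorname{id}$, which doubles $r$ to $8(2^{n+1})^4$, and it needs slalom avoidance even for convergence in the natural order.

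One formal point needs fixing. A challenge in $\mathfrak B;\mathfrak R$ requires $F:\omega^\omega\to\CC$. When only finitely many blocks are active, which can only happen if $g\le^* e$, your series is finitely supported and so is not an allowed value, not merely a harmless one. Replace it there by a fixed $a^*\in\CC$, exactly as the paper does.
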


\begin{proof}
Put $r(n)=8(2^{n+1})^4$ for $n<\omega$.
We shall define:
\[
\begin{aligned}
 \phi_-&:\omega^\omega\longrightarrow
              \omega^\omega\times{}^{\omega^\omega}\CC,\\
 \phi_+&:\omega^\omega\times\Sym\longrightarrow\mathcal S_r.
\end{aligned}
\]

For each $g\in\omega^\omega$ and $n\in \omega$ with $g(n)>0$, apply Lemma~\ref{lem:finite}
with $m=g(n)$ and $q=2^{n+1}$ to obtain a positive integer $L_n^g$ and
balanced vectors $(v_{n,k}^g)_{k<g(n)}$ in $\{-1/L_n^g,1/L_n^g\}^{L_n^g}$
such that, for every $\sigma\in\operatorname{Sym}(L_n^g)$,
\begin{equation}\label{eq:finite-block-bound}
 \left|\{k<g(n):M_\sigma(v_{n,k}^g)>1/2^{n+1}\}\right|
 \leq 4(2^{n+1})^4=\frac{r(n)}2.
\end{equation}
If $g(n)=0$, put $L_n^g=1$.

Partition $\omega$ into successive intervals $(I_n^g:n<\omega)$
of lengths $L_n^g$. For $k<g(n)$, define $u_{n,k}^g$ by placing $v_{n,k}^g$ on $I_n^g$:
\[
 u_{n,k}^g(i)=
 \begin{cases}
  v_{n,k}^g(i-\min I_n^g),&i\in I_n^g,\\
  0,&i\notin I_n^g.
 \end{cases}
\]
Thus $u_{n,k}^g$ is supported on $I_n^g$, with sum zero and absolute
sum one.

\begin{figure}[htbp]
\centering
\begin{tikzpicture}[font=\small, >=stealth,
 block/.style={draw=blue!55!black,fill=blue!4,
 minimum width=2.2cm,minimum height=0.65cm}]
 \node[block] (v) at (0,1.5) {$v_{n,k}^g$};
 \node[block] (u) at (0,0) {$v_{n,k}^g$};
 \draw[->,blue!55!black] (v.south) -- (u.north);
 \node[anchor=east] at (-1.25,0) {$u_{n,k}^g=\bigl(0,\ldots,0,$};
 \node[anchor=west] at (1.25,0) {$,0,0,\ldots\bigr)$};
 \node[below] at (0,-0.4) {$I_n^g$};
 \draw[blue!55!black] (u.south west) -- ++(0,-0.18)
  node[below,font=\scriptsize] {$\min I_n^g$};
\end{tikzpicture}
\caption{Placing $v_{n,k}^g$ on $I_n^g$ and extending by zero.}
\label{fig:block-embedding}
\end{figure}

For $\pi\in\Sym$, define the response map by
\begin{equation}\label{eq:bad-slalom}
 \phi_+(g,\pi)(n)=
 \{k<g(n):M(u_{n,k}^g)>1/2^{n+1}
                  \text{ or }M_\pi(u_{n,k}^g)>1/2^{n+1}\}.
\end{equation}

\begin{claim}
For every $g\in\omega^\omega$ and $\pi\in\Sym$, the sequence
$\phi_+(g,\pi)$ is an $r$-slalom.
\end{claim}

\begin{proof}[Proof of the claim]
Fix $n<\omega$. If $g(n)=0$, then $\phi_+(g,\pi)(n)=\varnothing$,
so the required bound is immediate. Assume $g(n)>0$.
Let $\pi_n^{g}\in\operatorname{Sym}(L_n^g)$ list the coordinates of
$I_n^g$ in the order in which $\pi$ visits them. Write
\[
    \pi^{-1}[I_n^g]=\{t_0<\cdots<t_{L_n^g-1}\}
\]
and define
\[
    \pi_n^g(i)=\pi(t_i)-\min I_n^g\qquad(i<L_n^g).
\]
Thus, the mapping
$s\mapsto\pi^{-1}(\min I_n^g+\pi_n^{g}(s))$ from $L_{n}^g$ to $\pi^{-1}[I_n^g]$ is a strictly increasing bijection.
For this reason, for every $k<g(n)$ and $j<\omega$,
\[
 S_j^\pi(u_{n,k}^g)=S_{|\pi[j]\cap I_n^g|}^{\pi_n^{g}}(v_{n,k}^g),
\]
and therefore $M_\pi(u_{n,k}^g)=M_{\pi_n^{g}}(v_{n,k}^g)$.
Thus, by \eqref{eq:finite-block-bound},
\[
    |\{k<g(n):M_{\pi_n^{g}}(v_{n,k}^g)>1/2^{n+1}\}|
    \leq 4(2^{n+1})^4=\frac{r(n)}2.
\]
Similarly, $M(u_{n,k}^g)=M(v_{n,k}^g)$, so the same
estimate applies to the first condition in \eqref{eq:bad-slalom}. Hence
\begin{equation*}
 |\phi_+(g,\pi)(n)|\leq\frac{r(n)}2+\frac{r(n)}2=r(n).
\end{equation*}
\end{proof}

Next, for $e\in\omega^\omega$, let $A_e^g=\{n<\omega:e(n)<g(n)\}$ and define, coordinatewise,
\begin{equation*}
 a_e^g=\sum_{n\in A_e^g}u_{n,e(n)}^g.
\end{equation*}
Notice that each $i<\omega$ belongs to
a unique interval $I_n^g$, and every summand supported on another
interval vanishes at $i$. Thus, for this $n$, which depends on $i$, we have
\[
 a_e^g(i)=
 \begin{cases}
  u_{n,e(n)}^g(i),&n\in A_e^g,\\
  0,&n\notin A_e^g,
 \end{cases}
 \qquad\text{where }i\in I_n^g.
\]
In particular, $a_e^g$ is well-defined.

\begin{claim}
If $e<^\infty g$ and $e\notin^\infty\phi_+(g,\pi)$, then
$a_e^g\in\CC$ and both its natural sum and its $\pi$-rearranged sum
are zero.
\end{claim}

\begin{proof}[Proof of the claim]
Since $e<^\infty g$, the set $A_e^g$ is infinite, so
\begin{equation}\label{eq:absolute}
 \sum_{i\in \omega}|a_e^g(i)|
 =\sum_{n\in A_e^g}\sum_{i\in I_n^g}|u_{n,e(n)}^g(i)|
 =\sum_{n\in A_e^g}1=\infty.
\end{equation}
Fix $\rho\in\{\operatorname{id}_\omega,\pi\}$ and $\eta>0$.
Choose $N$ such that $2^{-N}<\eta$ and
$e(n)\notin\phi_+(g,\pi)(n)$ for every $n\geq N$.
For $n\in A_e^g$ with $n\geq N$, \eqref{eq:bad-slalom} gives
$M_\rho(u_{n,e(n)}^g)\leq1/2^{n+1}$.
For each $j$, the finite set $\rho[j]$ meets only finitely many of
the disjoint intervals $I_n^g$. Grouping the terms of the partial sum
according to these intervals gives
\[
\begin{aligned}
 S_j^\rho(a_e^g)
 &=\sum_{i\in\rho[j]}a_e^g(i)\\
 &=\sum_{n\in A_e^g}\sum_{i\in\rho[j]\cap I_n^g}u_{n,e(n)}^g(i)
 =\sum_{n\in A_e^g}S_j^\rho(u_{n,e(n)}^g).
\end{aligned}
\]
Only finitely many summands in the outer sums are nonzero.

Since $\bigcup_{n<N}I_n^g$ is finite, choose $J$ such that
$\bigcup_{n<N}I_n^g\subseteq\rho[J]$.
For every $j\geq J$ and $n\in A_e^g$ with $n<N$,
$S_j^\rho(u_{n,e(n)}^g)=\sum_{i\in I_n^g}u_{n,e(n)}^g(i)=0$.

Thus, for $j\geq J$ we have
\[
\begin{aligned}
 |S_j^\rho(a_e^g)|
 &=\left|\sum_{\substack{n\in A_e^g\\n\geq N}}
                  S_j^\rho(u_{n,e(n)}^g)\right|\\
 &\leq\sum_{\substack{n\in A_e^g\\n\geq N}}
                   |S_j^\rho(u_{n,e(n)}^g)|\\
 &\leq\sum_{\substack{n\in A_e^g\\n\geq N}}M_\rho(u_{n,e(n)}^g)\\
 &\leq\sum_{n\geq N}\frac1{2^{n+1}}=2^{-N}<\eta.
\end{aligned}
\]
Since $\eta>0$ was arbitrary, $S_j^\rho(a_e^g)\to0$.
Taking $\rho=\operatorname{id}_\omega$ and $\rho=\pi$ gives the two
asserted sums, and \eqref{eq:absolute} gives conditional convergence.
\end{proof}

To define the challenge map, fix any $a^*\in\CC$ and put
\[
 \phi_-(e)=(e,f_e),\qquad
 f_e(g)=
 \begin{cases}
  a_e^g,&a_e^g\in\CC,\\
  a^*,&\text{otherwise}.
 \end{cases}
\]
Finally, suppose $(g,\pi)$ responds to $\phi_-(e)$. This means
$e<^\infty g$ and $\pi$ rearranges $f_e(g)$.
If $e\notin^\infty\phi_+(g,\pi)$, the preceding claim gives
$a_e^g\in\CC$ and shows that $\pi$ preserves its sum.
But then $f_e(g)=a_e^g$, a contradiction.
Therefore $e\in^\infty\phi_+(g,\pi)$, verifying the morphism condition.
\end{proof}

Consequently, Theorem~\ref{thm:main} holds.
\begin{proof}[Proof of Theorem~\ref{thm:main}]
By Lemma~\ref{lem:morphism}, there exists $r:\omega\to\omega\setminus\{0\}$ with $r(n)\to\infty$ and a morphism
\[
 (\phi_-,\phi_+):\mathfrak B;\mathfrak R\longrightarrow\mathfrak S_r.
\]
By Remark~\ref{rem:sequential-norm} and the known bound
$\mathfrak b\leq\rr$ from Theorem~\ref{thm:known-sandwich},
\[
\begin{aligned}
 \nonM=\|\mathfrak S_r\|
 &\leq\|\mathfrak B;\mathfrak R\|\\
 &=\max\{\|\mathfrak B\|,\|\mathfrak R\|\}
 =\max\{\mathfrak b,\rr\}=\rr.
\end{aligned}
\]
The reverse inequality $\rr\leq\nonM$ is also part of
Theorem~\ref{thm:known-sandwich}.
\end{proof}

\section{Conclusion}\label{sec:conclusion}

Theorem~\ref{thm:main} answers Question~46 of \cite{BBBHHL}, repeated
as Problem~10 of \cite{BrechBrendleTelles}, by showing that $\mathfrak{rr}=\nonM$.
In this section, we record some further
consequences for other related cardinal characteristics of the continuum.

In \cite{BBBHHL}, the authors defined several variants of the rearrangement number
by restricting the permitted outcomes of rearrangements.

\begin{definition}\label{def:rr-variants}
If $\pi\in\Sym$
rearranges $a\in\CC$, we say that the outcome of the rearrangement is of type
\begin{enumerate}
\item[$f$] if $\sum_n a_{\pi(n)}$ converges to a real number different
      from the original sum;
\item[$i$] if it diverges to $+\infty$ or to $-\infty$;
\item[$o$] if its partial sums diverge by oscillation, meaning that
      they have no limit in the extended real line
      $[-\infty,+\infty]$.
\end{enumerate}
If $x$ is one of $f,i,o,fi,fo,io$, a collection
$\Pi\subseteq\Sym$ is \emph{$x$-rearranging} if, for every
$a\in\CC$, some $\pi\in\Pi$ has one of the outcome types whose letter
occurs in $x$. Define $\mathfrak{rr}_x$ as the least cardinality of an
$x$-rearranging family. This gives the cardinals
\[
 \rrf,\quad\rri,\quad\rro,\quad
 \rrfi,\quad\rrfo,\quad\rrio.
\]
\end{definition}

Some ZFC relations between these cardinals were obtained in \cite{BBBHHL} and are shown in Figure~\ref{fig:rr-variants}.

\begin{figure}[ht]
\begin{center}
\begin{tikzpicture}[
  x=1cm,
  y=1cm,
  cardinal/.style={font=\large,inner sep=2pt}
]
  \node[cardinal] (rrf)  at (-1.5,1.5) {$\rrf$};
  \node[cardinal] (rri)  at ( 1.5,1.5) {$\rri$};
  \node[cardinal] (rrfi) at ( 0,0)     {$\rrfi$};
  \node[cardinal] (rr)   at ( 0,-1.5)  {$\rr=\rrfo=\rrio=\rro$};

  \draw (rrf) -- (rrfi);
  \draw (rri) -- (rrfi);
  \draw (rrfi) -- (rr);
\end{tikzpicture}
\end{center}
\caption{The hierarchy of rearrangement numbers.
Higher means provably at least as large in ZFC.}
\label{fig:rr-variants}
\end{figure}

Recall that $\dd$ is the least size of a dominating family in
$(\omega^\omega,\leq^*)$, and $\cofM$ is the least size of a family
of meager sets cofinal under inclusion in the ideal $\cM$.

\begin{corollary}\label{cor:rr-variants}
In ZFC,
\[
 \rr=\rro=\rrfo=\rrio=\nonM
 \quad\text{and}\quad
 \cofM\leq\rrfi\leq\min\{\rrf,\rri\}.
\]
In particular, if $\cofM=\mathfrak c$, then
$\rrfi=\rrf=\rri=\mathfrak c$.
\end{corollary}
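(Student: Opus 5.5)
The first part, $\rr=\rro=\rrfo=\rrio=\nonM$, follows from Theorem~\ref{thm:main} together with the ZFC equalities $\rr=\rrfo=\rrio=\rro$ from \cite{BBBHHL} recorded in Figure~\ref{fig:rr-variants}. The inequality $\rrfi\leq\min\{\rrf,\rri\}$ is immediate from the definitions: an $f$-rearranging or $i$-rearranging family is $fi$-rearranging. So the real content is the lower bound $\cofM\leq\rrfi$. The final sentence then follows, because $\rrf,\rri\leq\mathfrak c=|\Sym|$.

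For $\cofM\leq\rrfi$ I would use the characterization $\cofM=\max\{\dd,\nonM\}$. It therefore suffices to show $\dd\leq\rrfi$ and $\nonM\leq\rrfi$. The second is clear, since $\nonM=\rr\leq\rrfi$: an $fi$-rearranging family is rearranging.

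For $\dd\leq\rrfi$ I would first check whether \cite{BBBHHL} already proves it. I recall that they show $\rrf,\rri\geq\dd$, essentially by this argument, and would cite it if so. Otherwise I would argue directly. Let $\Pi$ have size $<\dd$. For each $\pi\in\Pi$, let $h_\pi(n)$ be a bound on how far $\pi$ displaces initial segments, for instance the least $k$ with $\pi[k]\supseteq n$ and $\pi^{-1}[k]\supseteq n$. Choose $g$ not dominated by any such function (taking finitely many iterates if needed). Then for each $\pi$ there are infinitely many $n$ with $g(n)>h_\pi(n)$.

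Using $g$, build a single series $a\in\CC$ as in Section~\ref{sec:construction}. Place blocks on successive intervals whose endpoints grow according to iterates of $g$. Each block should have sum zero, absolute sum about $1$ and shrinking amplitude, but with a large positive excursion at its start: many positive terms first, then the compensating negatives. The goal is that every $\pi\in\Pi$ fails to have an $f$ or $i$ outcome. At the infinitely many $n$ where $\pi$ nearly respects the $n$-th block boundary, $S^\pi$ is close to the original partial sum $S$ (which is near $0$). Since the sum of $a$ is $0$, a finite limit of $S^\pi$ could then only be $0$, so $\pi$ does not change the sum; and an infinite limit is ruled out since $S^\pi$ keeps returning near $0$. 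Hence $\Pi$ is not $fi$-rearranging.

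The main obstacle is this last construction. The difficulty is making one series work simultaneously for all $\pi\in\Pi$ using only the unboundedness of $g$, which gives agreement at infinitely many but unknown $n$, rather than domination. I would handle this by making blocks long relative to $g$, so that each good $n$ for $\pi$ forces a return near $0$ at the end of block $n$. That suffices because both forbidden outcomes ($f$ with a different finite limit, and $i$) require convergence in the extended reals, which recurrent returns near $0$ prevent.
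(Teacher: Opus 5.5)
Your proposal is essentially the paper's proof. The first chain is Theorem~\ref{thm:main} combined with \cite[Theorem~5]{BBBHHL}, and the lower bound is $\cofM=\max\{\dd,\nonM\}=\max\{\dd,\rr\}\leq\rrfi$, where $\dd\leq\rrfi$ is exactly \cite[Theorem~12]{BBBHHL}; note you need this for $\rrfi$ itself, since $\rrf,\rri\geq\dd$ alone would not give it, so no direct construction is required. Your fallback sketch would not work as written: non-domination only gives, for infinitely many $k$, some $j$ with $\pi[j]$ sandwiched between the initial segments $x_k$ and $x_{k+2}$, so $S^\pi_j-S_{x_k}$ is a sum over an arbitrary subset of two whole blocks, which need not be small when blocks have absolute sum about $1$ (and a large positive excursion in every block would already prevent the original series from converging); the standard fix is a sparse series with single terms $(-1)^k/(k+1)$ at the rapidly spaced positions $x_k$ and zeros in between.
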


\begin{proof}
By \cite[Theorem~5]{BBBHHL}, $\rr=\rro=\rrfo=\rrio$, so the first
equality follows from Theorem~\ref{thm:main}. Also,
$\dd\leq\rrfi$ by \cite[Theorem~12]{BBBHHL}. Thus the classical
identity $\cofM=\max\{\dd,\nonM\}$
\cite[Theorem~5.6]{BlassHandbook} gives
\[
 \cofM=\max\{\dd,\rr\}\leq\rrfi.
\]
The remaining conclusions follow from the definitions and the upper
bound $\mathfrak c$ for all the rearrangement numbers.
\end{proof}

Some cardinals related to the rearrangement number were defined
in \cite{BrendleBrianHamkins}, including the subseries number $\sz$,
the subrearrangement number $\sr$, and the oscillation subseries number $\sz_o$.

\begin{definition}[{\cite{BrendleBrianHamkins}}]\label{def:subseries}
The \emph{subseries number} $\sz$ is the least size of a family
$\mathcal A\subseteq[\omega]^\omega$ such that, for every $a\in\CC$ there exists $A\in\mathcal A$ such that $\sum_{n\in A}a_n$ diverges.
Restricting divergence to divergence by oscillation gives the \emph{oscillation subseries number} $\sz_o$.

The \emph{subrearrangement number} $\sr$ is the least size of a
family $F$ of injections from $\omega$ to $\omega$ such that, for
every $a\in\CC$, some $f\in F$ makes $\sum_n a_{f(n)}$ diverge.
\end{definition}

Brendle, Brian, and Hamkins proved
$\covN\leq\sz\leq\sz_o\leq\nonM$
\cite[Theorems~4, 11, and~13]{BrendleBrianHamkins}.
They also established $\rr\leq\max\{\bb,\sz\}$ and
$\sz_o\leq\max\{\bb,\sz\}$
\cite[Theorems~19(1) and~21]{BrendleBrianHamkins}, while
$\bb\leq\rr$ was already known from \cite[Theorem~11]{BBBHHL}.
For subrearrangements, they obtained the identity
$\sr=\min\{\sz,\rr\}$
\cite[Theorem~35]{BrendleBrianHamkins}.

The comparison between $\sz$ and $\rr$ was left undecided in one
direction: Question~20 of \cite{BrendleBrianHamkins} asks whether
$\rr<\sz$ is consistent. Equivalently, in view of $\bb\leq\rr$,
they asked whether the bound $\rr\leq\max\{\bb,\sz\}$ can be strict.
At the end of Section~10, they also asked whether $\sr<\sz$ is
consistent. In the other direction, their analysis of the Laver model
already gave the consistency of $\sz=\sz_o<\bb=\rr$
\cite[Section~9]{BrendleBrianHamkins}.
Combining the known relations with Theorem~\ref{thm:main} yields the
following answers.

\begin{corollary}\label{cor:subseries}
In ZFC,
\[
 \sr=\sz\leq\sz_o\leq\rr=\nonM,
 \qquad
 \rr=\max\{\bb,\sz\}=\max\{\bb,\sz_o\}.
\]
If $\bb<\nonM$, then $\sr=\sz=\sz_o=\rr=\nonM$.
\end{corollary}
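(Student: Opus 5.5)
The plan is to combine Theorem~\ref{thm:main} with the ZFC relations of Brendle, Brian, and Hamkins quoted just before the statement. No new combinatorics should be needed. We take as given: $\sz\leq\sz_o\leq\nonM$; $\rr\leq\max\{\bb,\sz\}$; $\sz_o\leq\max\{\bb,\sz\}$; $\sr=\min\{\sz,\rr\}$; $\bb\leq\rr$ from Theorem~\ref{thm:known-sandwich}; and $\rr=\nonM$ from Theorem~\ref{thm:main}.

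First, the chain. Since $\rr=\nonM$, the bound $\sz_o\leq\nonM$ becomes $\sz_o\leq\rr$. Then $\sz\leq\sz_o\leq\rr$, so $\sr=\min\{\sz,\rr\}=\sz$. This gives $\sr=\sz\leq\sz_o\leq\rr=\nonM$.

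Second, the max identities. From $\bb\leq\rr$ and $\sz\leq\rr$ we get $\max\{\bb,\sz\}\leq\rr$. Together with $\rr\leq\max\{\bb,\sz\}$, this yields $\rr=\max\{\bb,\sz\}$. For the version with $\sz_o$, use monotonicity:
\[
\rr=\max\{\bb,\sz\}\leq\max\{\bb,\sz_o\}\leq\rr,
\]
where the last inequality holds because $\sz_o\leq\rr$ and $\bb\leq\rr$. (Alternatively, $\sz_o\leq\max\{\bb,\sz\}$ gives the same conclusion.)

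Finally, suppose $\bb<\nonM=\rr$. Then $\rr=\max\{\bb,\sz\}$ forces the maximum to be attained by $\sz$, so $\sz=\rr$. The chain $\sr=\sz\leq\sz_o\leq\rr$ then collapses to equalities, giving $\sr=\sz=\sz_o=\rr=\nonM$.

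The only step requiring care is the bookkeeping: make sure each cited inequality is a ZFC result, and in particular that $\sz_o\leq\nonM$ is available so that $\sz_o\leq\rr$ follows. With that in hand, nothing here is a real obstacle.
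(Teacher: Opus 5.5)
Your proposal is correct and follows essentially the same argument as the paper: you combine $\sz\leq\sz_o\leq\nonM=\rr$, $\bb\leq\rr$ and $\rr\leq\max\{\bb,\sz\}$ to get the sandwich and both max identities, you use $\sr=\min\{\sz,\rr\}$ to get $\sr=\sz$, and you read off $\sz=\rr$ when $\bb<\nonM$. Your only deviation is the optional alternative via $\sz_o\leq\max\{\bb,\sz\}$, which is not needed.
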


\begin{proof}
By \cite[Theorems~13 and~19(1)]{BrendleBrianHamkins},
$\sz_o\leq\nonM$ and $\rr\leq\max\{\bb,\sz\}$.
Together with $\sz\leq\sz_o$, $\bb\leq\rr$, and
Theorem~\ref{thm:main}, these yield
\[
 \rr\leq\max\{\bb,\sz\}\leq\max\{\bb,\sz_o\}\leq\rr.
\]
Moreover, \cite[Theorem~35]{BrendleBrianHamkins} gives
$\sr=\min\{\sz,\rr\}=\sz$.
If $\bb<\nonM=\rr$, the identity $\rr=\max\{\bb,\sz\}$ forces
$\sz=\rr$, and the other equalities follow.
\end{proof}

In particular, Question~20 of \cite{BrendleBrianHamkins}, asking
whether $\rr<\sz$ is consistent, has a negative answer. The same is
true of the question at the end of Section~10 of that paper about
the consistency of $\sr<\sz$.

\subsection*{Acknowledgments}

The author thanks Tan Özalp for pointing out a serious error concerning the use of
the Laver property in a forcing notion in an earlier version of this manuscript.

This study was financed, in part, by the São Paulo Research Foundation
(FAPESP), Brazil. Process Number 2025/07302-0.

\subsection*{Declaration on the use of generative artificial intelligence}

The author used the OpenAI GPT-6 Astra large language model through Codex during the
research and preparation of this manuscript for generating and
developing novel mathematical arguments and proofs, searching the literature,
checking arguments for possible gaps and errors, and drafting and
revising the text and \LaTeX{} source. The author
takes full responsibility for the arguments, claims, references, and
any errors that remain.

\bibliographystyle{amsplain}
\bibliography{includes/references}

\end{document}